\definecolor{red}{rgb}{1,0,0}
\definecolor{blue}{rgb}{0,0,.7}
\definecolor{green}{rgb}{0,.6,0}
\definecolor{purp}{rgb}{.5,0,.5}
\numberwithin{figure}{section}   % added LH 11/15/17
\newtheorem{thm}{Theorem}[section]
\newtheorem{cor}[thm]{Corollary}
\newtheorem{lem}[thm]{Lemma}
\theoremstyle{definition}
\theoremstyle{definition}
\theoremstyle{definition}
\newcommand{\edim}{\operatorname{edim}}
\newcommand{\bit}{\begin{itemize}}
\newcommand{\eit}{\end{itemize}}
\newcommand{\ben}{\begin{enumerate}}
\newcommand{\een}{\end{enumerate}}
\newcommand{\beq}{\begin{equation}}
\newcommand{\eeq}{\end{equation}}
\newcommand{\bea}{\begin{eqnarray*}} % * means no number
\newcommand{\eea}{\end{eqnarray*}}
\newcommand{\bpf}{\begin{proof}}
\newcommand{\epf}{\end{proof}\ms}
\newcommand{\bmt}{\begin{bmatrix}}
\newcommand{\emt}{\end{bmatrix}}
\newcommand{\ms}{\medskip}
\newcommand{\lc}{\left\lceil}
\newcommand{\rc}{\right\rceil}
\newcommand{\lf}{\left\lfloor}
\newcommand{\rf}{\right\rfloor}
\newcommand{\noi}{\noindent}
\title{Extremal results for graphs of bounded metric dimension}
\author{Jesse Geneson \and Suchir Kaustav \and Antoine Labelle}
\begin{document}
%\linenumbers
\maketitle

\begin{abstract}
Metric dimension is a graph parameter motivated by problems in robot navigation, drug design, and image processing. In this paper, we answer several open extremal problems on metric dimension and pattern avoidance in graphs from (Geneson, Metric dimension and pattern avoidance, Discrete Appl. Math. 284, 2020, 1-7). Specifically, we construct a new family of graphs that allows us to determine the maximum possible degree of a graph of metric dimension at most $k$, the maximum possible degeneracy of a graph of metric dimension at most $k$, the maximum possible chromatic number of a graph of metric dimension at most $k$, and the maximum $n$ for which there exists a graph of metric dimension at most $k$ that contains $K_{n, n}$. 

We also investigate a variant of metric dimension called edge metric dimension and solve another problem from the same paper for $n$ sufficiently large by showing that the edge metric dimension of $P_n^{d}$ is $d$ for $n \geq d^{d-1}$. In addition, we use a probabilistic argument to make progress on another open problem from the same paper by showing that the maximum possible clique number of a graph of edge metric dimension at most $k$ is $2^{\Theta(k)}$. We also make progress on a problem from (N. Zubrilina, On the edge dimension of a graph, Discrete Math. 341, 2018, 2083-2088) by finding a family of new triples $(x, y, n)$ for which there exists a graph of metric dimension $x$, edge metric dimension $y$, and order $n$. In particular, we show that for each integer $k > 0$, there exist graphs $G$ with metric dimension $k$, edge metric dimension $3^k(1-o(1))$, and order $3^k(1+o(1))$. 
\end{abstract}

\noi {\bf Keywords} metric dimension, edge metric dimension, extremal functions, pattern avoidance

\noi{\bf AMS subject classification} 05C12, 05C90 \bigskip

%%%%%%%%%%%%%%%%%%%%%%%%%%%%%%%%%%%%%%%%%%%
\section{Introduction}

The parameter of metric dimension for graphs has been studied for several decades \cite{hmd, lmd, smd, tmd} and is motivated by models of robot navigation \cite{lmd}, drug design \cite{cmd, cmd1, jmd}, and image processing \cite{mmd}. Suppose that a robot is dropped somewhere unknown in a graph and is able to move from vertex to vertex. Some vertices in the graph have landmarks, and the robot can measure its distance in the graph to any landmark. We want to find the fewest number of landmarks so that the robot can determine its current vertex by only using the distances to the landmarks. In other words, we need to find the fewest number of landmarks so that for any two vertices $u, v \in G$, there exists a landmark $\ell$ for which $u$ and $v$ have different distances to $\ell$, i.e. the landmark $\ell$ \emph{distinguishes} $u$ and $v$. We say that a set of landmarks is a \emph{resolving set} for $G$ if any two vertices in $G$ can be distinguished by some landmark in the set. Equivalently, let the \emph{distance vector} of a vertex $v$ be the vector $(d_1,\cdots,d_k)$ where $d_i$ is the distance of $v$ to the $i^{\text{th}}$ landmark. A set of landmarks is a resolving set if and only if no two vertices have the same distance vector. The \emph{metric dimension} $\dim(G)$ of the graph $G$ is the minimum size of a resolving set of $G$.

Recently Kelenc et al \cite{emd} defined a variant of metric dimension in which the robot moves from edge to edge instead of vertex to vertex. In this variant, the robot can still measure its distance in the graph to any landmark, where we define the distance $d(e, w)$ of edge $e = \left\{u, v\right\}$ to vertex $w$ as $\min(d(u, w), d(v, w))$. As with the standard metric dimension, we say that landmark $\ell$ distinguishes edges $e$ and $f$ if $e$ and $f$ have different distances to $\ell$. We say that a set of landmarks is an \emph{edge resolving set} for $G$ if any two edges in $G$ can be distinguished by some landmark in the set. The \emph{edge metric dimension} $\edim(G)$ of the graph $G$ is the minimum size of an edge resolving set of $G$. This variant has already been investigated in several recent publications that focus on generalized Petersen graphs \cite{femd}, necklace graphs \cite{lemd}, sunlet graphs and prism graphs \cite{nemd}, graph operations \cite{yemd}, convex polytopes \cite{zemd}, and extremal values \cite{temd}.

\subsection{Past results and open problems}

Kelenc et al compared $\dim(G)$ and $\edim(G)$ in \cite{emd} and proved that the number of edges in a graph of edge metric dimension $k$ and diameter $D$ is at most $(D+1)^k$. They asked whether there is a bound on $\dim(G)$ in terms of $\edim(G)$ or vice versa. Zubrilina \cite{zmd} showed that $\frac{\edim(G)}{\dim(G)}$ is unbounded and improved the bound on the maximum number of edges in a graph of edge metric dimension $k$ and diameter $D$ to $\binom{k}{2}+k D^{k-1} + D^k$. Zubrilina also characterized the graphs of order $n$ with edge metric dimension $n-1$, asked for a characterization of the graphs of order $n$ with edge metric dimension $n-2$, asked whether there exist graphs $G$ with $\edim(G) \gg 2^{\dim(G)}$, and more generally asked for the triples $(x, y, n)$ for which there exist graphs of metric dimension $x$, edge metric dimension $y$ and order $n$ \cite{zmd}. Geneson \cite{gmd} characterized the graphs of edge metric dimension $n-2$ and improved the bound on the maximum number of edges in a graph of edge metric dimension $k$ and diameter $D$ to $(\lf \frac{2D}{3} \rf + 1)^k + k \sum_{i = 1}^{\lc \frac{D}{3} \rc} (2i)^{k-1}$. Geneson also proved a number of results about metric dimension and pattern avoidance.

Khuller et al \cite{lmd} proved that no graph of metric dimension at most $k$ contains a clique of size $2^k+1$. Kelenc et al \cite{emd} proved that no graph of edge metric dimension at most $k$ contains $K_{1, 2^{k}+1}$. Geneson \cite{gmd} proved that these bounds are sharp, implying that the maximum possible clique number of a graph of metric dimension at most $k$ is $2^k$. Geneson also asked what is the maximum possible clique number of a graph of edge metric dimension at most $k$ \cite{gmd}. This quantity only had rough bounds, a lower bound of $k+1$ and an upper bound of $O(2^{k/2})$.

Geneson \cite{gmd} also proved that the maximum degree of a graph of edge metric dimension at most $k$ is $2^k$. For the same problem with metric dimension, Geneson showed that the maximum degree of a graph of metric dimension at most $k$ is between $3^k-k-1$ and $3^k-1$. As a corollary, this showed that the maximum number of edges in a graph of order $n$ and metric dimension $k$ is at most $\frac{(3^k-1)n}{2}$. 

Furthermore, Geneson showed that the maximum $n$ for which there exists a graph of metric dimension at most $k$ that contains $K_{n,n}$ as a subgraph is in the range $[2^{k/2}-1, 3^k/2]$. Geneson also showed that the maximum possible chromatic number and maximum possible degeneracy of a graph of metric dimension at most $k$ are bounded in $[2^k, 3^k]$ and $[2^{k-1},3^k-1]$ respectively. In addition, Geneson proved that $\edim(P_n^{d}) \leq d$ and asked about the exact value of $\edim(P_n^{d})$ in general.

\subsection{New results}

We introduce a family of infinite graphs $D_k$ for $k = 1, 2, \dots$ in which we can embed all graphs of metric dimension $k$. We use this family to prove a number of extremal results about metric dimension, including that the maximum degree of any graph of metric dimension at most $k$ is $3^k-1$, the maximum number of edges in a graph on $n$ vertices with metric dimension $k$ is $\frac{n(3^k-1)(1-o(1))}{2}$, the maximum $n$ for which there exists a graph of metric dimension at most $k$ that contains $K_{n,n}$ as a subgraph is $n =2^{k-1}$, and the maximum size of a wheel subgraph of a graph of metric dimension at most $k$ is $3^k$. We also use the $D_k$ family to show that the maximum possible chromatic number of any graph of metric dimension at most $k$ is $2^k$ and the maximum possible degeneracy of any graph of metric dimension at most $k$ is $\frac{3^k-1}{2}$. This answers multiple open problems from \cite{gmd}. 

We also answer Zubrilina's question of whether there exist graphs $G$ with $\edim(G) \gg 2^{\dim(G)}$ affirmatively. We learned that independently the paper \cite{imd} answered the same question with a very different construction. Regardless, we use the new construction to make progress on Zubrilina's question of finding the triples $(x, y, n)$ for which there exist graphs of metric dimension $x$, edge metric dimension $y$ and order $n$ by showing that for all $\epsilon > 0$, there exist graphs $G$ for which $\edim(G) \gg 2^{\dim(G)}$ and $\frac{\edim(G)}{|V(G)|} \geq 1-\epsilon$. 

Using a probabilistic argument, we make progress on another problem from \cite{gmd} by proving that the maximum possible clique number of a graph of edge metric dimension at most $k$ is $2^{\Theta(k)}$. We solve another problem from \cite{gmd} for $n$ sufficiently large by proving that $\dim(P_n^{d}) = \edim(P_n^d) = d$ for all $n \geq d^{d-1}$.

\subsection{Structure of the paper}

In Section \ref{dkfam}, we define the family $D_k$ and use it to prove our results on maximum degree, number of edges, chromatic number, degeneracy, complete bipartite subgraphs, and wheels. Section \ref{fromd} has our other pattern avoidance results. In Section \ref{striples}, we provide new constructions of families of graphs with $\edim(G) \gg 2^{\dim(G)}$ and new triples $(x, y, n)$. Finally in Section \ref{sgrid}, we prove that $\dim(P_n^{d}) = \edim(P_n^d) = d$ for all $n \geq d^{d-1}$. In Section \ref{sprob} we conclude and discuss some related open questions.

\section{The family $D_k$} \label{dkfam}

In this section, we define a family of graphs $D_k$ that is very useful for proving extremal results about metric dimension. Let $D_k$ be the graph on the vetrex set $\mathbb{Z}_{\ge 0}^k$ with edges between points that differ by at most one in each coordinate. 

\begin{lem}Any graph of metric dimension $k$ can be embedded as a subgraph of $D_k$ by sending each point to its distance vector with respect to a given resolving set of $k$ landmarks. 
\end{lem}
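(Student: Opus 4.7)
The plan is to verify that the proposed map $\phi : V(G) \to V(D_k) = \mathbb{Z}_{\geq 0}^k$ defined by $\phi(v) = (d(v,\ell_1), \dots, d(v,\ell_k))$, for a fixed resolving set $\{\ell_1, \dots, \ell_k\}$ of $G$ of size $k$, is a genuine subgraph embedding. Since being a subgraph embedding does not require an induced embedding, only two things need checking: that $\phi$ is injective on vertices, and that whenever $uv \in E(G)$ the images $\phi(u)$ and $\phi(v)$ are adjacent in $D_k$, i.e.\ are distinct points of $\mathbb{Z}_{\geq 0}^k$ whose coordinates differ by at most one.

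First I would note that injectivity is exactly the defining property of a resolving set. If $\phi(u) = \phi(v)$ then $d(u,\ell_i) = d(v,\ell_i)$ for every $i$, so no landmark distinguishes $u$ from $v$; because $\{\ell_1,\dots,\ell_k\}$ resolves $G$, this forces $u = v$.

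Next I would handle edge preservation via the triangle inequality. Suppose $uv \in E(G)$ so that $d(u,v) = 1$. For each landmark $\ell_i$, the triangle inequality gives $d(u,\ell_i) \leq d(u,v) + d(v,\ell_i) = 1 + d(v,\ell_i)$ and symmetrically, so $|d(u,\ell_i) - d(v,\ell_i)| \leq 1$. Hence the coordinates of $\phi(u)$ and $\phi(v)$ differ by at most one in every slot, and since $\phi(u) \neq \phi(v)$ by injectivity, $\phi(u)\phi(v)$ is indeed an edge of $D_k$.

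There is no real obstacle: both ingredients (resolving-set definition and the triangle inequality) plug in directly, and the statement asks only for a subgraph embedding rather than an induced one, so one does not need to worry about the possibility that non-adjacent vertices of $G$ could map to adjacent points of $D_k$.
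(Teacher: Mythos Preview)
Your proof is correct and follows the same approach as the paper: injectivity comes from the definition of a resolving set, and edge preservation from the fact that adjacent vertices have distances to any landmark differing by at most one. The paper's version is terser (it leaves injectivity implicit and does not spell out the triangle inequality), but the argument is identical.
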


\begin{proof}
If $G$ has metric dimension $k$, and there is an edge between vertices $u$ and $v$ in $G$, then the distances from $u$ and $v$ to any landmark differ by at most $1$. Thus the images of $u$ and $v$ are connected by an edge in $D_k$, regardless of the set of landmarks that we choose.
\end{proof}

Consider the induced subgraph $C_k(q)$ of $D_k$ whose vertex set consists of the integer points in the $k$-dimensional cross polytope centered at $(q,\cdots,q)$ having as a face the $(k-1)$-simplex with its corners at the $k$ points with all coordinates equal to $q$ except for one coordinate which is equal to $0$.

This graph has metric dimension at most $k$, since if we let $v_i$ be the point with $i^{th}$ coordinate $0$ and all others $q$, then the distance vector of each point $x$ with respect to the set of landmarks consisting of the vertices $v_i$ is exactly $x$.

The following corollary explains the usefulness of $D_k$ for pattern avoidance problems in graphs of bounded metric dimension.

\begin{cor}\label{dkpa}
Given a graph $H$, there exist a graph of metric dimension at most $k$ containing $H$ if and only if $H$ is contained in $D_k$.
\end{cor}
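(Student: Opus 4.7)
The plan is to prove the two directions of the biconditional separately, relying on the preceding lemma and the explicit construction of $C_k(q)$.

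For the forward direction, I would invoke the lemma directly. Given a graph $G$ that contains $H$ and has metric dimension $k' \le k$, pad any resolving set of size $k'$ to a resolving set of size exactly $k$ by adding arbitrary extra vertices; the distinguishing property is preserved under supersets. The lemma then sends $G$ into $D_k$ via distance vectors, and restricting to $H \subseteq G$ gives an embedding of $H$ into $D_k$.

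For the backward direction, I would use $C_k(q)$ itself as the containing graph. The preceding paragraph already established that $C_k(q)$ has metric dimension at most $k$, so it suffices to fit $H$ inside $C_k(q)$ for some $q$. Starting from an embedding of $H$ into $D_k$, pick any vertex $v_0 \in V(H)$ and translate each $w \in V(H)$ to $w' = w - v_0 + (q, \dots, q)$. Adjacency in $D_k$ depends only on coordinate differences and so is translation-invariant, meaning the translated image is still a subgraph of $D_k$ (with non-negative coordinates once $q$ is large). The translated vertex $w'$ lies in $C_k(q)$ iff $\sum_i |w_i - (v_0)_i| \le q$; choosing $q \ge M := \max_{w \in V(H)} \sum_i |w_i - (v_0)_i|$, which is finite since $H$ is finite, makes this hold for every $w$, and edges are automatically preserved because $C_k(q)$ is induced in $D_k$.

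The substantive arguments have already been made in the preceding lemma and paragraph, so the only new step here is the translation. I expect the only subtlety to be checking that edges of the translated $H$ land inside $C_k(q)$, which follows from $C_k(q)$ being an induced subgraph of $D_k$.
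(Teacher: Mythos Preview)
Your proposal is correct and follows essentially the same route as the paper: the forward direction invokes the embedding lemma, and the backward direction translates a copy of $H$ inside $D_k$ into the cross-polytope graph $C_k(q)$ for $q$ large enough. You simply spell out details (padding the resolving set, the explicit translation, and the $\ell_1$-radius bound on $q$) that the paper leaves implicit.
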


\begin{proof}
The forward direction follows since every graph of metric dimension at most $k$ can be embedded in $D_k$, and the backward direction follows since any copy of $H$ in $D_k$ can be translated to a copy of $H$ in $C_k(q)$ for any sufficiently large $q$.
\end{proof}

\begin{thm}\label{maxdeg}
The maximum possible degree of any graph of metric dimension at most $k$ is $3^k-1$.
\end{thm}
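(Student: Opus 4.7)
The plan is to prove both bounds directly from the framework developed earlier in this section, namely the embedding lemma and Corollary \ref{dkpa}.

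For the upper bound, I would invoke the embedding lemma: any graph $G$ of metric dimension at most $k$ sits as a subgraph of $D_k$ via the distance vector map. This map is an injection on vertices that preserves adjacency, so for any $v \in G$ the degree of $v$ in $G$ is at most the degree of its image in $D_k$. Since vertices of $D_k$ lie in $\mathbb{Z}_{\geq 0}^k$ and two points are adjacent iff they differ by at most $1$ in each coordinate, each vertex has at most $3^k - 1$ neighbors: every coordinate admits the three shifts $-1, 0, +1$, giving $3^k$ candidate points, minus the all-zero shift corresponding to the vertex itself. This yields the upper bound of $3^k - 1$.

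For the lower bound, by Corollary \ref{dkpa} it suffices to exhibit a copy of the star $K_{1, 3^k-1}$ as a subgraph of $D_k$. I would take the center to be an interior vertex such as $(1, 1, \dots, 1)$, whose $3^k - 1$ neighbors are obtained by independently shifting each coordinate by $-1, 0,$ or $+1$ and discarding the trivial shift. All these points lie in $\mathbb{Z}_{\geq 0}^k$, and by the edge rule of $D_k$ the center is joined to each of them. Corollary \ref{dkpa} then produces a graph of metric dimension at most $k$ containing $K_{1, 3^k-1}$, hence admitting a vertex of degree at least $3^k - 1$.

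I do not anticipate any substantial obstacle here, since the real work was carried out in constructing $D_k$ and proving Corollary \ref{dkpa}. The only detail that needs to be verified carefully is that the local degree bound in $D_k$ is exactly $3^k - 1$ (and, in particular, that boundary effects from the nonnegativity constraint do not matter for the lower bound—which is why choosing an interior center such as $(1, 1, \dots, 1)$ is convenient).
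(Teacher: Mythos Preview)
Your proposal is correct and matches the paper's proof essentially verbatim: the paper observes that $D_k$ has maximum degree $3^k-1$ and then invokes Corollary~\ref{dkpa} with the stars $K_{1,n}$. Your extra details (the $3^k$ shift count and the choice of center $(1,\dots,1)$) just make explicit what the paper leaves as ``immediate.''
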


\begin{proof}
It is immediate that the maximum degree of $D_k$ is $3^k-1$, so the maximum possible degree of a graph of metric dimension at most $k$ is $3^k-1$ by Corollary \ref{dkpa} with the family of stars $K_{1, n}$.
\end{proof}

We use $W_n$ to denote the wheel on $n+1$ vertices. In the next result, we show that the maximum size of a wheel subgraph in a graph of metric dimension at most $k$ is the same as the maximum size of a star subgraph. This result also uses the family $D_k$.

\begin{thm}\label{maxwheel}
For $k\ge 2$, the maximum $n$ for which there exists a graph of metric dimension at most $k$ that contains a subgraph isomorphic to the wheel $W_n$ is $n = 3^k-1$.
\end{thm}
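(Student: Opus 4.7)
The plan is to apply Corollary \ref{dkpa} to both bounds. The upper bound is immediate: $W_n$ contains a vertex (its hub) of degree $n$, so by Theorem \ref{maxdeg} any graph of metric dimension at most $k$ containing $W_n$ satisfies $n \le 3^k - 1$.

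For the lower bound, Corollary \ref{dkpa} reduces the problem to exhibiting $W_{3^k - 1}$ inside $D_k$. Pick any vertex $h \in D_k$ with all coordinates at least $1$. Then $h$ has exactly $3^k - 1$ neighbors in $D_k$, and after translating by $-h$ the induced subgraph on $N(h)$ becomes the ``king graph'' $K$ on $\{-1, 0, 1\}^k \setminus \{0\}$, where two vertices are adjacent iff they differ by at most $1$ in every coordinate. Attaching $h$ as a universal vertex to a Hamilton cycle of $K$ produces the required $W_{3^k - 1}$, so it suffices to prove that $K$ is Hamiltonian for $k \ge 2$.

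I would prove Hamiltonicity of $K$ by induction on $k$. For the base case $k = 2$, the eight nonzero points around the origin admit the natural cyclic order $(1,0), (1,1), (0,1), (-1,1), (-1,0), (-1,-1), (0,-1), (1,-1)$, which is a Hamilton cycle of $K$. For the inductive step, slice $\{-1, 0, 1\}^k \setminus \{0\}$ into three layers $L_+, L_0, L_-$ by the value of the last coordinate: $L_\pm$ are full copies of $\{-1, 0, 1\}^{k-1}$ under the king graph, while $L_0$ is $\{-1, 0, 1\}^{k-1} \setminus \{0\}$. I would use the inductive hypothesis to obtain a Hamilton cycle of $L_0$, and a standard snake-like traversal of the $3^{k-1}$ grid-points to get Hamilton cycles of $L_\pm$. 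Opening each of these three cycles into a Hamilton path and splicing them end-to-end via inter-layer edges yields the desired Hamilton cycle of $K$, since $(x, \pm 1)$ and $(y, 0)$ are adjacent in $K$ whenever $x$ and $y$ are king-adjacent in $\{-1, 0, 1\}^{k-1}$.

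The main obstacle will be the splicing step: the break-points of the three layer-paths must be chosen so that compatible inter-layer edges exist simultaneously on both sides of the middle layer. Because the king graph is very dense -- every vertex in one layer has many king-adjacent partners in an adjacent layer -- this should be an elementary combinatorial check rather than a real obstruction. To avoid case analysis, I would strengthen the inductive hypothesis to assert that the Hamilton cycle of $\{-1, 0, 1\}^{k-1} \setminus \{0\}$ can be arranged to use a specified pair of adjacent edges of my choosing; this makes the break-points explicit and lets the splicing go through uniformly.
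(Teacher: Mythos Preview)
Your overall strategy matches the paper's: the upper bound comes from Theorem \ref{maxdeg}, and the lower bound reduces via Corollary \ref{dkpa} to finding a Hamilton cycle in the punctured king-cube $\{0,1,2\}^k\setminus\{1\}^k$, proved by induction on $k$ with a three-layer slicing. The difference is in how the inductive step is executed.

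You treat the two ``full'' layers $L_\pm$ with a separate snake traversal and then face a genuine splicing problem, which you propose to handle by strengthening the inductive hypothesis (``the Hamilton cycle can be arranged to use a specified pair of adjacent edges''). You do not verify that this strengthened statement survives the induction, and in fact for $k=2$ it is not clear that the $8$-vertex king ring admits a Hamilton cycle through an \emph{arbitrary} prescribed pair of adjacent edges; at the very least this needs checking. So as written the splicing step is a gap, not merely a routine detail.

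The paper sidesteps all of this with a neat trick: it reuses the \emph{same} inductive cycle $a_1,\dots,a_j$ (with $j=3^k-1$) for every layer, appending the centre $1^k$ at the end to cover the two full layers. Concretely, the cycle in dimension $k+1$ is
\[
0a_1,\dots,0a_j,\;0\,1^k,\;1a_j,1a_{j-1},\dots,1a_2,\;2a_1,\dots,2a_j,\;2\,1^k,\;1a_1,\;0a_1,
\]
which one checks directly is a Hamilton cycle since every $a_i$ is adjacent to $1^k$. No separate snake construction, no strengthened hypothesis, no case analysis at the break-points. If you adopt this idea your proposal becomes complete and essentially identical to the paper's argument.
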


\begin{proof}
It suffices to show that the subgraph of $D_k$ on $\left\{0,1,2\right\}^k-\left\{1\right\}^k$ has an Hamiltonian cycle. For $k =2$, we use the cycle $(0,0),(0,1),(0,2),(1,2),(2,2),(2,1),(2,0),(1,0),(0,0)$.

Suppose for inductive hypothesis that the subgraph of $D_k$ on $\left\{0,1,2\right\}^k-\left\{1\right\}^k$ has a Hamiltonian cycle $a_1,a_2,\dots,a_j,a_1$ with $j = 3^k-1$. Then the subgraph of $D_{k+1}$ on $\left\{0,1,2\right\}^{k+1}-\left\{1\right\}^{k+1}$ has the Hamiltonian cycle $0a_1$, $0a_2$, $\dots$, $0a_j$, $0 1^k$, $1a_j$, $1 a_{j-1}$, $\dots$, $1a_2$, $2a_1$, $2a_2$, $\dots$, $2a_j$, $2 1^k$, $1 a_1$, $0a_1$ by the inductive hypothesis.
\end{proof}

We also obtain a sharp bound on the maximum number of edges using the family $C_k(q)$.

\begin{thm}\label{maxedges}
The maximum number of edges in a graph on $n$ vertices with metric dimension $k$ is $\frac{n(3^k-1)(1-o(1))}{2}$.
\end{thm}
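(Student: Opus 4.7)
The upper bound $\tfrac{n(3^k-1)}{2}$ is an immediate consequence of Theorem \ref{maxdeg} together with the handshake lemma, so the plan is to exhibit, for arbitrarily large $n$, a graph on $n$ vertices with metric dimension at most $k$ and at least $\tfrac{n(3^k-1)(1-o(1))}{2}$ edges. The natural candidate is the cross-polytope subgraph $C_k(q)$ of $D_k$ introduced in this section, which is already known to have metric dimension at most $k$.

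Write $n_q = |V(C_k(q))|$. A standard lattice-point count for dilates of a convex body gives $n_q = \tfrac{(2q)^k}{k!}(1+o(1))$ as $q \to \infty$. The crux of the argument is to show that almost all vertices of $C_k(q)$ achieve the maximum possible degree $3^k - 1$. Call $x \in C_k(q)$ \emph{interior} if $\sum_i |x_i - q| \le q - k$. For any neighbor $y$ of $x$ in $D_k$ we have $|y_i - x_i| \le 1$ in each coordinate, so the triangle inequality yields $\sum_i |y_i - q| \le \sum_i |x_i - q| + k \le q$; moreover, $x$ being interior forces $x_i \ge k$ for each $i$ and hence $y_i \ge 0$, so $y \in C_k(q)$. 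Thus every interior vertex has all $3^k - 1$ of its $D_k$-neighbors inside $C_k(q)$. The non-interior vertices lie in the shell $q - k < \sum_i |x_i - q| \le q$, whose cardinality is $O(q^{k-1}) = o(n_q)$ by the same convex-body estimate applied to a codimension-one boundary. Summing degrees and dividing by two then gives $|E(C_k(q))| \ge \tfrac{n_q(3^k-1)(1-o(1))}{2}$.

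To lift the result from the subsequence $n = n_q$ to every sufficiently large $n$, I would choose the largest $q$ with $n_q \le n$ and attach short pendant paths to outward-extremal vertices $v \in C_k(q)$ satisfying $v_i \ge q$ for every $i$ and $\sum_i(v_i - q) > q - k$. For such $v$, the $j$-th vertex along the attached path has distance vector $v + j \cdot (1,\dots,1)$ with respect to the original $k$ landmarks, which lies strictly outside $C_k(q)$ for every $j \ge 1$, so the original landmarks continue to resolve the enlarged graph and the metric dimension stays at most $k$. Since $n_{q+1}/n_q \to 1$, the number $n - n_q$ of added vertices is $o(n)$, and the asymptotic edge count is preserved. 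I expect the main technical care to be in this final padding step, where one must distribute $O(q^{k-1})$ new vertices among $\Theta(q^{k-1})$ eligible attachment points while keeping all distance vectors pairwise distinct; the interior-degree verification and the lattice-point estimate are both routine.
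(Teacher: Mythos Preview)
Your proof is correct and follows essentially the same route as the paper: the upper bound comes from the degree bound in Theorem~\ref{maxdeg}, and the lower bound comes from the cross-polytope graphs $C_k(q)$, using that the proportion of interior vertices (those with full degree $3^k-1$) tends to $1$ as $q\to\infty$. The paper's own proof is two sentences long and omits the explicit interior-vertex count and the padding step you carry out; your treatment is simply a fleshed-out version of the same argument, and in fact the padding can be done with a single pendant path attached at one corner of the polytope rather than distributing paths over many attachment points.
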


\begin{proof}
The upper bound is immediate from Theorem \ref{maxdeg}, while the lower bound follows since the proportion of vertices that are interior in $C_k(q)$ can get arbitrarily close to $1$ as $q \rightarrow \infty$.
\end{proof}

As an immediate corollary, we determine the maximum possible degeneracy of any graph of metric dimension at most $k$.

\begin{thm}
The maximum possible degeneracy of any graph of metric dimension at most $k$ is $\frac{3^k-1}{2}$. 
\end{thm}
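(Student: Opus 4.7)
The plan is to treat the two bounds separately. For the lower bound, I would invoke Theorem \ref{maxedges}: the graph $C_k(q)$ has $n$ vertices and at least $\frac{n(3^k-1)(1-o(1))}{2}$ edges as $q \to \infty$. Since any graph with $n$ vertices and $m$ edges has integer degeneracy at least $\lceil m/n \rceil$, and since $\frac{3^k-1}{2}$ is itself an integer (as $3^k$ is odd), taking $q$ sufficiently large forces the degeneracy of $C_k(q)$ to be at least $\frac{3^k-1}{2}$.

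For the upper bound, I would invoke Corollary \ref{dkpa} to reduce to showing that every finite subgraph $H$ of $D_k$ has degeneracy at most $\frac{3^k-1}{2}$. The strategy is to order the vertices of $H$ lexicographically and delete them from the lex-largest down; it suffices to verify that a vertex $x = (x_1,\dots,x_k) \in \mathbb{Z}_{\ge 0}^k$ has at most $\frac{3^k-1}{2}$ lex-smaller neighbors in $D_k$. A neighbor $x+\delta$ with $\delta \in \{-1,0,1\}^k \setminus \{0\}$ is lex-smaller than $x$ exactly when the first nonzero coordinate of $\delta$ equals $-1$, and summing $3^{k-i}$ over the position $i$ of that first nonzero coordinate gives
\[
\sum_{i=1}^{k} 3^{k-i} \;=\; \frac{3^k-1}{2}.
\]

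The main obstacle is the upper bound, since a naive route through the maximum degree of $D_k$ (Theorem \ref{maxdeg}) only yields $3^k-1$, off by a factor of two. The halving comes from exploiting the translation symmetry of $D_k$ via a coordinate-based ordering; once the lexicographic count above is in hand, both bounds match and pin the degeneracy at $\frac{3^k-1}{2}$.
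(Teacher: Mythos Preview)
Your proposal is correct and follows essentially the same route as the paper: the lower bound via Theorem~\ref{maxedges} and the $m/n$ bound on degeneracy, and the upper bound via the lexicographic ordering on $D_k$. The only cosmetic difference is that the paper phrases the upper bound by looking at the lex-\emph{minimal} vertex of a subgraph $H$ (which has only lex-larger neighbors in $H$), whereas you delete from the lex-largest down and count lex-smaller neighbors; by the evident symmetry $\delta\leftrightarrow -\delta$ these counts coincide, and your explicit computation $\sum_{i=1}^{k}3^{k-i}=\frac{3^k-1}{2}$ supplies a detail the paper leaves implicit.
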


\begin{proof}
The upper bound follows since $D_k$ has degeneracy at most $\frac{3^k-1}{2}$: for any subgraph $H$ of $D_k$, the minimal point of $H$ with respect to the lexicographical order has degree at most $\frac{3^k-1}{2}$ in $H$. Thus any graph of metric dimension at most $k$ also has degeneracy at most $\frac{3^k-1}{2}$. The lower bound follows from Theorem \ref{maxedges} for $n$ sufficiently large and the well-known fact that any graph $G$ with $m$ edges and $n$ vertices has degeneracy at least $\frac{m}{n}$.
\end{proof}

$D_k$ is also useful for finding the maximum possible chromatic number. 

\begin{thm}
The maximum possible chromatic number of any graph of metric dimension at most $k$ is $2^k$. 
\end{thm}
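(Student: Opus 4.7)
The plan is to prove the upper bound by exhibiting a proper $2^k$-coloring of $D_k$ (which then transfers to every subgraph, hence to every graph of metric dimension at most $k$), and to prove the lower bound by exhibiting a clique of size $2^k$ inside $D_k$ and applying Corollary \ref{dkpa}.

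For the upper bound, I would color each vertex $x = (x_1, \ldots, x_k) \in \mathbb{Z}_{\ge 0}^k$ of $D_k$ by its coordinatewise parity vector $c(x) = (x_1 \bmod 2, \ldots, x_k \bmod 2) \in \{0,1\}^k$. This uses exactly $2^k$ colors. To check that this is a proper coloring, observe that if $u$ and $v$ are adjacent in $D_k$, then $u \neq v$ and $|u_i - v_i| \le 1$ for all $i$. Since $u \neq v$, there is some coordinate $i$ with $u_i \neq v_i$, and for that coordinate $|u_i - v_i| = 1$, so $u_i$ and $v_i$ have opposite parities. Hence $c(u) \neq c(v)$, which means the coloring is proper. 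Because every graph of metric dimension at most $k$ embeds in $D_k$ (as established in this section), pulling back this coloring yields a proper $2^k$-coloring of any such graph.

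For the lower bound, it suffices to exhibit a graph of metric dimension at most $k$ containing a $K_{2^k}$ subgraph. Inside $D_k$, the $2^k$ vertices with coordinates in $\{0,1\}^k$ are pairwise adjacent (any two such vectors differ by at most $1$ in every coordinate), so they form a copy of $K_{2^k}$ in $D_k$. By Corollary \ref{dkpa}, there exists a graph of metric dimension at most $k$ containing $K_{2^k}$, and this graph has chromatic number at least $2^k$.

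I do not anticipate a main obstacle: the only thing that requires care is verifying that the parity coloring is proper, and the observation above (that adjacency in $D_k$ forces at least one coordinate to change by exactly $1$) handles it cleanly. The lower bound is essentially free once the $\{0,1\}^k$ clique in $D_k$ is noted and Corollary \ref{dkpa} is invoked.
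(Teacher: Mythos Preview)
Your proof is correct and matches the paper's approach: the upper bound via the coordinatewise parity coloring of $D_k$ is exactly what the paper does, and your lower bound via the $\{0,1\}^k$ clique in $D_k$ together with Corollary~\ref{dkpa} is a clean self-contained version of what the paper handles by citing the prior result from \cite{gmd}.
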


\begin{proof}
The lower bound was proved in \cite{gmd}. For the upper bound, note that we can assign a color to each point of $(\mathbb{Z}/2\mathbb{Z})^k$ and color each vertex with the color corresponding to its distance vector modulo $2$. This gives a valid coloring of the graph, since if two vertices are adjacent their distance vectors differ by at most $1$ in each coordinate.
\end{proof}

Next we use the $D_k$ family to solve another open problem from \cite{gmd}, the maximum $n$ for which there exists a graph of metric dimension at most $k$ that contains $K_{n,n}$.

\begin{thm}
The maximum $n$ for which there exists a graph of metric dimension at most $k$ that contains $K_{n,n}$ as a subgraph is $n =2^{k-1}$.
\end{thm}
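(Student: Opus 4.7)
The plan is to apply Corollary~\ref{dkpa} and work inside $D_k$: one must determine the largest $n$ for which $K_{n,n}$ embeds as a subgraph of $D_k$.

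For the lower bound $n\geq 2^{k-1}$, I would observe that the subcube $\{0,1\}^k\subseteq\mathbb{Z}_{\geq 0}^k$ induces a copy of $K_{2^k}$ in $D_k$, since any two distinct $\{0,1\}$-vectors differ by at most one in each coordinate. Any balanced bipartition of $\{0,1\}^k$---for instance by the parity of the Hamming weight---then produces a copy of $K_{2^{k-1},2^{k-1}}$ as a subgraph.

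For the upper bound, take disjoint $A,B\subseteq\mathbb{Z}_{\geq 0}^k$ with $|A|=|B|=n$ and $\|a-b\|_\infty\leq 1$ for every $(a,b)\in A\times B$, and introduce the coordinate-wise parity map $g\colon\mathbb{Z}_{\geq 0}^k\to\{0,1\}^k$ defined by $g(v)_i=v_i\bmod 2$. The quick observation is that any adjacent distinct pair in $D_k$ must have some coordinate differing by exactly $\pm 1$, so $g(a)\neq g(b)$ whenever $a\in A$ and $b\in B$; consequently
\[
|g(A)|+|g(B)|\leq |g(A)\cup g(B)|\leq 2^k.
\]

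The main obstacle is to upgrade this into $|A|+|B|\leq 2^k$, which would immediately yield $n\leq 2^{k-1}$. For that one must show that $g$ is injective on each of $A$ and $B$. If $a,a'\in A$ satisfy $g(a)=g(a')$ and $a\neq a'$, then some coordinate $i$ has $|a_i-a'_i|\geq 2$; combined with $|a_i-b_i|\leq 1$ and $|a'_i-b_i|\leq 1$ for every $b\in B$, this forces $|a_i-a'_i|=2$ and pins $b_i$ to the midpoint for every $b\in B$. I expect the hardest step to be tracking how such degenerate coordinates accumulate: the plan is either to restrict attention to the subspace of coordinates where $B$ is not already fixed and iterate the argument there, or to fashion an induction on $k$ in which a single degenerate coordinate lets one peel off one dimension and invoke the bound for $k-1$. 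Either route should close the gap, with the lower-bound construction above showing the resulting estimate is sharp.
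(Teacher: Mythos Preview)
Your upper-bound strategy contains a genuine gap: the intermediate target $|A|+|B|\le 2^k$ that you set yourself is simply false, so neither the ``restrict to the non-fixed coordinates and iterate'' route nor the ``peel off a degenerate coordinate and induct on $k$'' route can possibly succeed. A concrete counterexample already in $D_2$ is
\[
A=\{(0,1),(2,1)\},\qquad B=\{(1,0),(1,1),(1,2)\}.
\]
These sets are disjoint and every pair $(a,b)\in A\times B$ satisfies $\|a-b\|_\infty\le 1$, yet $|A|+|B|=5>2^2$. (Note that $g$ fails to be injective on \emph{both} sides here: $g(0,1)=g(2,1)$ and $g(1,0)=g(1,2)$.) This does not violate the theorem---the example only produces a $K_{2,3}$, and indeed $\min(|A|,|B|)=2=2^{k-1}$---but it shows that the symmetric bound $|A|+|B|\le 2^k$ is the wrong goal. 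What must actually be proved is the genuinely asymmetric statement $\min(|A|,|B|)\le 2^{k-1}$, and the parity map by itself does not single out one side over the other.

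The paper's argument builds this asymmetry in from the start. Working in the ambient graph $G$ rather than in $D_k$, it classifies each of the $k$ landmarks according to whether the vertices of $X\cup Y$ nearest to it lie entirely in $X$ (type~A), entirely in $Y$ (type~B), or in both (type~C). If every landmark is type~C then all of $X\cup Y$ sits inside a set of size $2^k$, giving $n\le 2^{k-1}$; otherwise the presence of a non-type-C landmark is used to bound one of $|X|,|Y|$ by $2^{k-1}$ directly. If you want to salvage the parity approach, you will need a comparable asymmetric case split rather than the global additive bound you are currently aiming for.
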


\begin{proof}
The lower bound is immediate from Corollary \ref{dkpa} by considering the subgraph of $D_k$ on $\left\{0, 1\right\}^k$ with one part having the vertices with first coordinate $0$ and the other part having the vertices with first coordinate $1$.

For the upper bound, suppose we have a copy of $K_{n,n}$ with parts $X$ and $Y$ in a graph of metric dimension at most $k$: we classify the landmarks into two types: $L$ is type A if the set of closest vertices to $L$ in the $K_{n,n}$ only has vertices from $X$, $L$ is type B if the set of closest vertices to $L$ in the $K_{n,n}$ only has vertices from $Y$, and $L$ is type C if the set of closest vertices to $L$ in the $K_{n,n}$ has vertices from both $X$ and $Y$. Then $|X| \leq 2^{|A|+|C|}$ and $|Y| \leq 2^{|B|+|C|}$. If all landmarks are type C, there are only two possible coordinates in the distance vectors for the vertices of the copy of $K_{n,n}$, so there are at most $2^k$ vertices in the $K_{n,n}$, which means $n \leq 2^{k-1}$. On the other hand if some landmark is not type C, then $2^{|A|+|C|} \leq 2^{k-1}$ or $2^{|B|+|C|} \leq 2^{k-1}$.
\end{proof}

We also use $D_k$ to bound the maximum possible minimum degree of a graph of metric dimension at most $k$.

\begin{thm}
The maximum possible minimum degree in any graph of metric dimension at most $k$ is at most $3^{k-1}$. 
\end{thm}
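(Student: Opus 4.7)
My plan is to exploit a simple observation: in any graph $G$ with a resolving set $S = \{\ell_1, \dots, \ell_k\}$ of size at most $k$, the landmark $\ell_1$ itself already has degree at most $3^{k-1}$, which forces $\delta(G) \le 3^{k-1}$ for free.

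To bound $\deg(\ell_1)$, I would argue as follows. Let $v$ be any neighbor of $\ell_1$. Then $d(v, \ell_1) = 1$, pinning down the first coordinate of the distance vector of $v$. For each $i \ge 2$, the triangle inequality applied to the edge $v\ell_1$ gives $|d(v, \ell_i) - d(\ell_1, \ell_i)| \le 1$, so the $i$-th coordinate of the distance vector of $v$ takes one of three consecutive values. Hence the distance vector of $v$ lies in a set of cardinality $3^{k-1}$. Since $S$ is resolving, distinct vertices have distinct distance vectors, so $\ell_1$ has at most $3^{k-1}$ neighbors, as required.

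I do not anticipate a real obstacle; the only routine check is that the three candidate values $d(\ell_1, \ell_i) - 1$, $d(\ell_1, \ell_i)$, $d(\ell_1, \ell_i) + 1$ are all nonnegative integers, which holds since $\ell_i \ne \ell_1$ forces $d(\ell_1, \ell_i) \ge 1$. One can also phrase the argument through the embedding into $D_k$ from the lemma at the start of this section: the image of $\ell_1$ has a $0$ in its first coordinate, and any $D_k$-neighbor of such a point has first coordinate in $\{0, 1\}$. The graph-edge condition $d(v, \ell_1) = 1$ rules out the value $0$, leaving at most $3^{k-1}$ possible images, one of which would have to be the image of $v$.
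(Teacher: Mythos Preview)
Your proof is correct and is essentially the same as the paper's: both show that any landmark vertex has degree at most $3^{k-1}$, since its neighbors have first coordinate fixed equal to $1$ and each remaining coordinate constrained to three values. The paper states this via the $D_k$ embedding while you first phrase it via the triangle inequality, but as you yourself note these are the same argument.
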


\begin{proof}
Any landmark has degree at most $3^{k-1}$. Indeed, considering the graph as embedded in $D_k$, all neighbours of the $i^\text{th}$ landmark $v_i$ have $i^\text{th}$ coordinate $1$ and differ by at most one from $v_i$ in every other coordinate.
\end{proof}

We can see that the bound in the last theorem is sharp for $k = 2$, using the subgraph of $D_2$ bounded by the square with vertices at $(1, 0), (0, 1), (1, 2), (2, 1)$. It is also sharp for $k = 3$, using a rhombic dodecahedron with the $3$ points $(q,q,0), (q,0,q), (0,q,q)$ at corners of the dodecahedron where four faces meet. This family that maximizes the minimum degree for $k = 2$ and $k = 3$ is similar in structure to the family in \cite{pmd} that maximizes the order of a graph of metric dimension $k$ and diameter $D$. It is unclear, however, whether it could be generalized to higher dimensions while still keeping a minimum degree of $3^{k-1}$.

\section{Further results on metric dimension and pattern avoidance in graphs}\label{fromd}

In \cite{gmd}, Geneson asked what is the maximum possible clique number of a graph of edge metric dimension at most $k$. This quantity was bounded between $k+1$ and $O(2^{k/2})$ in \cite{gmd}. We show next that it is $2^{\Theta(k)}$, using a probabilistic method. 

\begin{thm}
The maximum possible clique number of a graph of edge metric dimension at most $k$ is $2^{\Theta(k)}$.
\end{thm}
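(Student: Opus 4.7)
The plan is to match the known $O(2^{k/2})$ upper bound with a lower bound of $2^{\Omega(k)}$ via the probabilistic method. My goal is to construct a graph $G$ with $\edim(G) \leq k$ that contains $K_n$ as a subgraph, for some $n = 2^{\Omega(k)}$.

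The construction attaches $k$ external landmark vertices $\ell_1, \ldots, \ell_k$ to a clique on $V = \{v_1, \ldots, v_n\}$. Independently for each $u \in V$ and each $i \in [k]$, I include $i$ in a random subset $S_u \subseteq [k]$ with probability $1/2$, and draw the edge $\{\ell_i, u\}$ exactly when $i \in S_u$. A short distance calculation should give $d(u, \ell_i) = 1$ if $i \in S_u$ and $2$ otherwise (so long as each $\ell_i$ has at least one neighbor in $V$, which holds with overwhelming probability). Consequently the distance from a clique edge $\{u, v\}$ to $\ell_i$ equals $1$ precisely when $i \in S_u \cup S_v$ and $2$ otherwise, while the distance from an external edge $\{\ell_i, u\}$ to $\ell_j$ is $0$ if $j = i$ and otherwise equals $d(u, \ell_j) \in \{1, 2\}$.

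With these formulas in hand, I plan to reduce edge-resolvability by the landmark set $\{\ell_1, \ldots, \ell_k\}$ to a single combinatorial condition on the random sets: the pairwise unions $S_u \cup S_v$ should be distinct over all pairs $\{u, v\} \in \binom{V}{2}$. Clique edges are pairwise resolved exactly by this condition; a clique edge and an external edge are always resolved because only the latter has a $0$ in its distance vector; and two external edges are resolved either by the position of their $0$ coordinate (when they involve different $\ell_i$) or, when they share the same $\ell_i$, by any coordinate on which $S_u$ and $S_{u'}$ differ outside $\{i\}$, which must exist because distinct pairwise unions force $S_u \neq S_{u'}$.

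Finally, I establish the combinatorial condition by a union bound. A bit-by-bit calculation should give $\Pr[S_u \cup S_v = S_{u'} \cup S_{v'}] \leq (3/4)^k$ when $\{u,v\}$ and $\{u',v'\}$ share one vertex and $(5/8)^k$ when they are disjoint, so the expected number of colliding pairs is $O(n^3 (3/4)^k + n^4 (5/8)^k)$, which is less than $1$ for $n = c \cdot 2^{\beta k}$ with some absolute constant $\beta > 0$ and sufficiently small $c$. I expect the main obstacle to be the careful bookkeeping of edge signatures needed to see that a single ``distinct pairwise unions'' condition simultaneously resolves clique edges, external edges, and mixed pairs; the probabilistic step itself should be a routine union bound once the correct event has been isolated.
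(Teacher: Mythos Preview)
Your argument is correct and complete: the distance computations are right, the reduction to ``all pairwise unions $S_u\cup S_v$ distinct'' indeed handles all three kinds of edge pairs (the observation that distinct pairwise unions forces all $S_u$ distinct when $n\ge 3$ closes the external--external case), and the union bound with $\Pr[\,\cdot\,]\le(3/4)^k$ and $(5/8)^k$ yields $n=2^{\Omega(k)}$ with $k$ landmarks. Together with the cited $O(2^{k/2})$ upper bound this gives $2^{\Theta(k)}$.

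Your route, however, differs substantively from the paper's. The paper attaches $2k$ landmarks in complementary pairs $a_i,b_i$ to a clique whose vertices are labelled by binary strings, so that the pair $(d(e,a_i),d(e,b_i))$ recovers the coordinatewise \emph{sum} $s_u[i]+s_v[i]\in\{0,1,2\}$ rather than just the OR; it then needs a Sidon-type condition $s_u+s_v\neq s_{u'}+s_{v'}$ and obtains it by random selection followed by deletion. In exchange for the doubled landmark budget, this richer per-coordinate signature yields a larger clique: normalised to $m$ landmarks the paper obtains $n=2^{cm}$ with $c=\tfrac{1}{6}\log_2(8/3)\approx 0.236$, while your direct union bound gives $c=\tfrac{1}{3}\log_2(4/3)\approx 0.138$ (deletion would raise yours to $\tfrac{1}{2}\log_2(4/3)\approx 0.208$, still short of the paper's constant). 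The trade-off is that your construction is conceptually simpler---one random bipartite attachment, one clean combinatorial event, one union bound---whereas the paper's argument buys a better exponent at the cost of the base-$3$ encoding and a two-stage probabilistic step.
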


\begin{proof}
Consider the set $S$ of ternary strings of length $k$, having digits among $\{0,1,2 \}$. Let $V$ be the subset of $S$ whose digits are among $\{0,1 \}$. Let $+$ denote base $3$ addition. Using the binomial theorem, we can see that there are $\Theta(\sum_{i = 0}^{k}\binom{k}{i}2^{k-i} (2^i)^2) = \Theta(6^k)$ unordered pairs $\left\{p, q\right\}$ with $p = \left\{a, b\right\}$ and $q = \left\{c, d\right\}$ for $a, b, c, d, \in V$ such that $a+b=c+d$. 

If we uniformly at random select a subset $R$ of exactly $t = \Theta((\frac{8}{3})^{\frac{k}{3}})$ elements from $V$, the expected number of $\left\{p, q\right\}$ with $p = \left\{a, b\right\}$ and $q = \left\{c, d\right\}$ such that $a+b=c+d$ and $a, b, c, d \in R$ is $O(t^4\frac{6^k}{16^k})$. We can make this expected number less than $\frac{t}{2}$ using $t = \Theta((\frac{8}{3})^{\frac{k}{3}})$. So there exists a subset $T$ of $V$ having $t = \Theta(( \frac{8}{3})^{\frac{k}{3}})$ distinct elements such that the number of $\left\{p, q\right\}$ with $p = \left\{a, b\right\}$ and $q = \left\{c, d\right\}$ such that $a+b=c+d$ is less than $\frac{t}{2}$. For each of these $\left\{p, q\right\}$ with $p = \left\{a, b\right\}$ and $q = \left\{c, d\right\}$ such that $a+b=c+d$, we remove one of the strings $a, b, c, d$ from $T$, leaving us with a set $U$ of size $\Theta(( \frac{8}{3})^{\frac{k}{3}})$, which has no distinct $\left\{a, b\right\}$ and $\left\{c,d\right\}$ such that $a+b=c+d$.

Now, construct a $K_{|U|}$ and add $2k$ vertices $a_1,a_2,...a_k, b_1, b_2,...,b_k$. Join $a_i$ to a vertex $v$ in the $K_{|U|}$ if the string corresponding to $v$ has $i^{th}$ digit $0$, similarly join $b_i$ to $v$ if the corresponding string has $i^{th}$ digit $1$. We claim that
$a_1,a_2,...a_k,b_1,b_2,...b_k$ uniquely distinguishes the edges of the $K_{|U|}$. First of all, note that the edges of the $K_{|U|}$ are precisely those edges having no coordinate $0$. Because of the way we joined the $a_i, b_i$ to the edges, we can see that it is sufficient to ensure that for any two edges of the $K_{|U|}$, the two sums of the two pairs of strings corresponding to the endpoints of the edges are different. However, $U$ was chosen to satisfy this property. This means that the edge resolving set indeed uniquely distinguishes the edges of the $K_{|U|}$. %Now, for any $1 \leq i \leq k$ we can see that there do not exist four distinct strings $p,q,r,s$ such that $p,q$ differ in their $i^{th}$ digit only and $r,s$ differ in their $i^{th}$ digit only, since this would contradict the property of $U$ as then one of $p+r=q+s, p+s=q+r$ is true. So, by the pigeonhole principle there are at most $k$ pairs of strings in $U$ which differ by exactly one digit. We can delete one vertex from each corresponding pair, this ensures that the remaining edges of the graph are also distinguished by the $a_1,a_2,...a_k, b_1,b_2,...,b_k$.

Thus, we constructed a graph of edge metric dimension at most $2k$ having a $K_n$ with $n = \Theta((\frac{8}{3})^{\frac{k}{3}})$. This shows that the maximum $n$ for which there exists a graph of edge metric dimension at most $k$ containing $K_n$ satisfies $n = \Omega(( \frac{8}{3})^{\frac{k}{6}})$. 
\end{proof}

Up to a constant factor, we also bound the maximum $n$ for which there exists a graph of metric dimension at most $k$ that contains a subgraph isomorphic to the $n$-dimensional hypercube.

\begin{thm}
The maximum $n$ for which there exists a graph of metric dimension at most $k$ that contains a subgraph isomorphic to the $n$-dimensional hypercube $Q_n$ is $n = \Theta(k \log(k))$.
\end{thm}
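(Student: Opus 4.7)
The plan is to use Corollary~\ref{dkpa} to reduce the question to bounding the largest $n$ such that $Q_n$ embeds as a subgraph of $D_k$.

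For the upper bound, I would note that a subgraph embedding $\phi : Q_n \hookrightarrow D_k$ is non-expanding on graph distances, since it sends paths to paths. The ball of radius $d$ in $D_k$ is the $\ell_\infty$-ball of radius $d$, which has $(2d+1)^k$ points, while $Q_n$ has $\binom{n}{d}$ vertices at Hamming distance exactly $d$ from any fixed vertex. Hence $\binom{n}{d} \leq (2d+1)^k$ for every $d$; taking $d = \lfloor n/2 \rfloor$ and using $\binom{n}{\lfloor n/2 \rfloor} \geq 2^n/(n+1)$ yields $2^n \leq (n+1)^{k+1}$, so $n = O(k \log k)$.

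For the lower bound, I would construct a linear embedding $\phi(x) = \sum_{i=1}^n x_i v_i$ with $v_1, \dots, v_n \in \{0,1\}^k \setminus \{0\}$. Flipping one coordinate of $x \in \{0,1\}^n$ changes $\phi(x)$ by $\pm v_i$, a nonzero vector of $\ell_\infty$-norm at most $1$, so the map is automatically adjacency-preserving; injectivity is equivalent to the $2^n$ subset sums being distinct, i.e., $\sum_i z_i v_i \neq 0$ for every nonzero $z \in \{-1, 0, 1\}^n$. To produce such $v_i$'s with $n = c k \log k$, I would sample them independently and uniformly in $\{0,1\}^k$. A short calculation using Vandermonde's identity shows that for any fixed nonzero $z$ with support of size $m$ and exactly $a$ entries equal to $+1$,
\[
\Pr\Big[\sum_i z_i v_i = 0\Big] = \left(\frac{\binom{m}{a}}{2^m}\right)^{\!k}.
\]

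The final step is a union bound over $z$, split by support size. For small $m \leq k/\log k$ I would use the crude per-coordinate bound $\binom{m}{a}/2^m \leq 1/2$ against the cheap count of $z$'s with small support, and for larger $m$ I would use the asymptotic $\binom{m}{a}/2^m = O(m^{-1/2})$ against the count $\binom{n}{m} 2^m$. A standard Stirling optimization shows that both contributions sum to $o(1)$ provided $c > 0$ is a sufficiently small absolute constant, yielding the desired $v_i$'s (and excluding the negligible event that some $v_i$ is zero by another union bound of total mass $n/2^k$). The main obstacle will be this final balancing: at small $m$ the per-coordinate collision probability is close to $1/2$, so the naive union bound only recovers $n = O(k)$, and one must genuinely use the sharper Gaussian-type bound $p_m = O(1/\sqrt{m})$ at intermediate $m \sim k/\log\log k$ to gain the extra $\log k$ factor that matches the upper bound.
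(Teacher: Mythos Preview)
Your upper bound is essentially the paper's: both amount to observing that the $2^n$ vertices of $Q_n$, once mapped into $D_k$, must fit inside an $\ell_\infty$-ball of radius $n$, giving $2^n \le (2n+1)^k$ (the paper phrases this via the order--diameter inequality $|V|\le (D+1)^k$, which is the same counting argument).

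Your lower bound is correct but takes a genuinely different route. The paper simply quotes the known bound $\dim(Q_{\frac{1}{2}k\log_2 k})\le k$ from \cite{gridbound}, so $Q_n$ itself serves as the ambient graph. You instead build a probabilistic linear embedding $\phi(x)=\sum x_i v_i$ of $Q_n$ into $D_k$ with random $v_i\in\{0,1\}^k$, reducing injectivity to the Sidon-type condition that no nonzero $z\in\{-1,0,1\}^n$ satisfies $\sum z_i v_i=0$. Your Vandermonde computation of the per-coordinate collision probability $\binom{m}{a}/2^m$ is correct, and the two-range union bound (using $p_m\le 1/2$ for $m\lesssim k/\log k$ and $p_m=O(m^{-1/2})$ for larger $m$) does close for sufficiently small $c$; the crossover is indeed near $m\asymp k/\log k$ rather than the $k/\log\log k$ you mention in the final sentence, but this does not affect the argument. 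What this buys: your construction is self-contained and stays entirely within the paper's $D_k$ framework, at the cost of reproving from scratch what is essentially the coin-weighing bound underlying the cited result; the paper's proof is a one-line citation but imports that machinery wholesale.
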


\begin{proof}
Since the order of a graph of metric dimension $k$ and diameter $D$ is at most $(D+1)^k$, we have $2^n \leq (n+1)^k$, which implies that $n \leq 3k \log_2(k)$. For the lower bound, it is known that $\dim(Q_{\frac{1}{2}k \log_2(k)}) \leq k$ \cite{gridbound}.
\end{proof}

\section{New triples}\label{striples}

In order to prove our result about new triples $(x, y, n)$, we introduce a construction of another family of graphs of metric dimension $k$ that has maximum degree $3^k-1$. Let $L$ be the set of points in $D_k$ with one coordinate $0$ and all others equal to $2$ and let $M_k$ be the induced subgraph of $D_k$ with vertex set $\{1,2,3\}^k\cup L$.

%\begin{defn}
%To construct $M_k$, take one vertex $v_w$ for each word $w$ in $\{1,2,3\}^k$ and $k$ additional vertices $u_i$ for $1\le i \le k$. Put an edge between $v_{22\cdots 2}$ and all others $v_w$ (this is our copy of $K_{1,3^k-1}$). For each $1\le i \le k$, put an edge between the vertex with a $1$ in position $i$ and $2$'s everywhere else and all the vertices with a $2$ at position $i$. Finally, for each $1\le i \le k$, put an edge between $u_i$ and all vertices with a $1$ at position $i$. The $k$ landmarks are the $u_i$: then we can easily see that the distance vector of $v_w$ is exactly $w$.
%\end{defn}

We prove a lemma that we will apply to the family $M_k$ to show that for each integer $k > 0$, there exist graphs $G$ with metric dimension $k$, edge metric dimension $3^k(1-o(1))$, and order $3^k(1+o(1))$.

\begin{lem}\label{highedim}
If $G$ has order $n$ and some vertex of degree $n-1-x$ that is within distance $2$ of all vertices in $G$, then $edim(G) \geq n-1-x-2^x$.
\end{lem}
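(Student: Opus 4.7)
The plan is to let $v$ be the witness vertex and partition $V(G) \setminus \{v\}$ into $A$ (the $n-1-x$ neighbors of $v$) and $B$ (the $x$ non-neighbors at distance exactly $2$ from $v$, which is everything else by hypothesis). The edges to focus on are the $n-1-x$ "spokes" $e_a = \{v,a\}$ for $a \in A$, since these are almost indistinguishable by landmarks outside of $B$.

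First I would compute $d(e_a, \ell)$ case by case. For $\ell = v$ we get $0$; for $\ell = a$ we get $0$; for $\ell \in A \setminus \{a\}$ we have $d(v,\ell) = 1$, so $d(e_a,\ell) = 1$; and for $\ell \in B$ we have $d(v,\ell) = 2$ and $d(a,\ell) \geq 1$, so $d(e_a,\ell) = \min(2, d(a,\ell)) \in \{1,2\}$, with the value being $1$ precisely when $a$ is adjacent to $\ell$ and $2$ otherwise.

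Now let $R$ be any edge resolving set of $G$ and decompose $R = R_v \cup R_A \cup R_B$ according to this partition (with $R_v \subseteq \{v\}$). The key observation is that for any two distinct $a, a' \in A \setminus R_A$, the edges $e_a$ and $e_{a'}$ have identical coordinates on $R_v \cup R_A$: both give $0$ on $v$ (if $v \in R$) and both give $1$ on every $\ell \in R_A$. So they must be separated by some landmark $\ell \in R_B$, which by the previous paragraph happens iff $a$ and $a'$ differ in their adjacency to some element of $R_B$. This means the map $a \mapsto (\mathbf{1}[a \sim \ell])_{\ell \in R_B} \in \{0,1\}^{R_B}$ is injective on $A \setminus R_A$, giving $|A \setminus R_A| \leq 2^{|R_B|} \leq 2^x$.

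Rearranging, $|R_A| \geq |A| - 2^x = (n-1-x) - 2^x$, and therefore $|R| \geq |R_A| \geq n - 1 - x - 2^x$, which is the desired bound. There is no real obstacle; the only thing to be careful about is the computation $d(e_a,\ell) = \min(2, d(a,\ell))$ for $\ell \in B$, and the observation that landmarks in $R_v \cup R_A$ cannot distinguish two spokes $e_a, e_{a'}$ with $a, a' \notin R_A$, which forces all of the distinguishing work onto $R_B$.
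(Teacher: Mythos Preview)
Your proof is correct and follows essentially the same approach as the paper: both focus on the spoke edges $\{v,a\}$ for $a$ a neighbor of $v$, compute that landmarks in $N[v]$ cannot distinguish two such spokes (unless one landmark is an endpoint), and use that the $x$ non-neighbors at distance $2$ give each spoke a $\{1,2\}$-valued signature, yielding the $2^x$ bound. The paper phrases this via pigeonhole on an arbitrary set of $2^x+1$ removed neighbors, while you argue directly that $|A\setminus R_A|\le 2^{|R_B|}\le 2^x$ for any edge resolving set $R$; the content is the same.
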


\begin{proof}
Let $v$ be the vertex of degree $n-1-x$. We may assume that $n-1-x \ge 2^x$, or else the lemma is already trivially true. Let $y_1, \dots, y_{2^x+1}$ denote any $2^x+1$ distinct neighbors of $v$. Let $S = V(G)-\left\{y_1, \dots, y_{2^x+1}\right\}$. To prove the Lemma, it suffices to show that there exist two distinct edges $v y_i$ and $v y_j$ with the same distance vector with respect to $S$, since that would imply that every edge resolving set for $G$ must contain at least $n-1-x-2^x$ neighbors of $v$.

For each $i = 1, \dots, 2^x+1$, $d(v, v y_i) = 0$ and $d(u, v y_i) = 1$ for each neighbor $u$ of $v$ in $S$. Each vertex $u \in S$ that is not a neighbor of $v$ has distance $2$ to $v$, so $d(u, v y_i) \in \left\{1,2\right\}$.

So, for any distinct $y_i$ and $y_j$, the only distances $d(u, v y_i)$ and $d(u, v y_j)$ on which $v y_i$ and $v y_j$ can differ for $u \in S$ are the vertices $u \not \in N[v]$. There are $x$ vertices $u \in S$ that are not in $N[v]$, since $v$ has degree $n-1-x$. So there are $2^x$ distinct possibilities for the tuple of distances from an edge $v y_i$ to $S$, but there are $2^x+1$ vertices $y_i$. Thus by the pigeonhole principle, there exist two distinct edges $v y_i$ and $v y_j$ with the same tuple of distances to $S$, so $S$ is not an edge resolving set for $G$. Thus every edge resolving set for $G$ must contain at least $n-1-x-2^x$ neighbors of $v$, so $\edim(G) \geq n-1-x-2^x$.
\end{proof}

We can use the last lemma on the graphs $M_k$ to obtain the following result which answers the question of Zubrilina of whether there exist graphs $G$ for which $\edim(G) \gg 2^{\dim(G)}$ \cite{zmd}.

\begin{thm}
For all $\epsilon > 0$, there exist graphs $G$ for which $\edim(G) \gg 2^{\dim(G)}$ and $\frac{\edim(G)}{|V(G)|} \geq 1-\epsilon$.
\end{thm}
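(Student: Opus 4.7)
The plan is to apply Lemma \ref{highedim} to the family $M_k$, taking $v = (2,2,\dots,2)$ as the distinguished high-degree vertex. First I would note that $|V(M_k)| = 3^k + k$ and compute $\deg(v)$: every point of $\{1,2,3\}^k$ differs from $v$ in each coordinate by at most $1$, so all $3^k - 1$ other vertices of $\{1,2,3\}^k$ are adjacent to $v$, whereas no point of $L$ is (each differs from $v$ by exactly $2$ in some coordinate). Thus $\deg(v) = 3^k - 1$ and, setting $n = 3^k + k$, the quantity $x := n - 1 - \deg(v)$ required by Lemma \ref{highedim} equals $k$.

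Next I would check that $v$ is within distance $2$ of every vertex of $M_k$. For any $u = (2,\dots,0,\dots,2) \in L$ with its $0$ in coordinate $i$, the vertex $(2,\dots,1,\dots,2)$ with the $1$ in coordinate $i$ lies in $\{1,2,3\}^k \subset V(M_k)$ and is adjacent in $M_k$ to both $v$ and $u$. So the hypotheses of Lemma \ref{highedim} hold with $n = 3^k + k$ and $x = k$, yielding $\edim(M_k) \geq 3^k - 1 - 2^k$.

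To finish the proof I would bound $\dim(M_k) \leq k$ by showing that the $k$ points $v_1,\dots,v_k$ of $L$, where $v_i$ has $0$ in position $i$ and $2$ elsewhere, form a resolving set. The key computation is that for $x \in \{1,2,3\}^k$ the distance from $x$ to $v_i$ in $M_k$ equals $x_i$, realized by first moving every coordinate $j \neq i$ to $2$ (staying inside $\{1,2,3\}^k$) and then decreasing coordinate $i$ one step at a time; this makes the distance vector of $x$ equal to $(x_1,\dots,x_k)$, while for $v_j \in L$ the distance vector is $(2,\dots,0,\dots,2)$ with a $0$ in position $j$. These $3^k + k$ distance vectors are pairwise distinct, so $\dim(M_k) \leq k$. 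Combining, $\edim(M_k)/|V(M_k)| \to 1$ and $\edim(M_k)/2^{\dim(M_k)} \to \infty$ as $k \to \infty$, so choosing $k$ sufficiently large in terms of $\epsilon$ gives a graph $G = M_k$ witnessing the theorem. The only delicate point is that distances must be computed in the induced subgraph $M_k$ rather than in $D_k$, but since all intermediate vertices used in the paths above can be chosen inside $\{1,2,3\}^k$, this is not an obstacle.
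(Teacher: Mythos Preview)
Your proof is correct and follows exactly the paper's approach: apply Lemma~\ref{highedim} to $M_k$ with $v=(2,\dots,2)$ and $x=k$, and verify $\dim(M_k)\le k$ via the landmark set $L$; you simply spell out details (the degree of $v$, the distance-$2$ check, and the resolving-set verification) that the paper leaves implicit. One tiny wording issue: your path from $x\in\{1,2,3\}^k$ to $v_i$, read literally as ``first move the other coordinates to $2$, then decrease coordinate $i$'', has length $x_i+1$ when some $x_j\ne 2$; to realize the claimed length $x_i$ just combine the adjustment of the other coordinates with the first decrement of coordinate $i$ into a single step, which is a legal move in $D_k$ and keeps you inside $M_k$.
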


\begin{proof}
For $k$ sufficiently large, $M_{k}$ is such a graph, since $|V(M_k)| = 3^k+k$, $\edim(M_k) \geq 3^k-1-2^k$ by Lemma \ref{highedim} (with the vertex $(2,2,2)$ and $x=k$) and $\dim(M_k) = k$.
\end{proof}

As a result, we also obtain new triples $(x, y, n)$ of integers for which there exist graphs of metric dimension $x$, edge metric dimension $y$, and order $n$.

\begin{cor}
For each integer $k > 0$, there exist graphs with metric dimension $k$, edge metric dimension $3^k(1-o(1))$, and order $3^k(1+o(1))$.
\end{cor}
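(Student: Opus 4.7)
The plan is to apply the preceding theorem with the specific family $M_k$ and track the asymptotics carefully. For each $k > 0$ I would take $G = M_k$ and verify the three estimates: $\dim(G) = k$, $|V(G)| = 3^k(1+o(1))$, and $\edim(G) = 3^k(1-o(1))$.

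First I would pin down $\dim(M_k) = k$. The inequality $\dim(M_k) \le k$ comes from the natural resolving set $L$ itself: since $M_k$ embeds in $D_k$ and the graph distances there are Chebyshev distances, the landmark $v_i = (2, \ldots, 2, 0, 2, \ldots, 2)$ (with the $0$ in position $i$) satisfies $d(x, v_i) = x_i$ whenever $x \in \{1,2,3\}^k$, so the distance vector of any such $x$ is $x$ itself, and the vectors of the $v_j$ themselves are pairwise distinct. For the matching lower bound I would invoke Theorem \ref{maxdeg}: the vertex $(2,\ldots,2)$ has degree $3^k-1$ in $M_k$, which exceeds $3^{k-1}-1$, so $\dim(M_k)$ cannot be at most $k-1$.

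The order is immediate from the definition: $|V(M_k)| = |\{1,2,3\}^k| + |L| = 3^k + k = 3^k(1+o(1))$. For $\edim(M_k)$, the lower bound is a direct application of Lemma \ref{highedim} with $v = (2,\ldots,2)$ and $x = k$. This vertex has degree $3^k-1 = |V(M_k)|-1-k$, and each of the $k$ remaining vertices (the elements of $L$) sits at Chebyshev distance exactly $2$ from $v$, so the lemma's hypotheses hold and it gives $\edim(M_k) \ge 3^k-1-2^k = 3^k(1-o(1))$. The matching upper bound is the trivial $\edim(M_k) \le |V(M_k)| = 3^k + k = 3^k(1+o(1))$ obtained by taking every vertex as a landmark: each edge $\{u,v\}$ has exactly $u$ and $v$ as its zero-distance landmarks, so distinct edges yield distinct distance vectors. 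Combined, $\edim(M_k) = 3^k(1 \pm o(1))$, which is the claimed $3^k(1-o(1))$ up to the soft asymptotic notation.

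There is essentially no main obstacle — each step is either immediate from the construction of $M_k$, from Theorem \ref{maxdeg}, or from Lemma \ref{highedim} proved just above. The only thing worth checking is that the offsets $k$ and $2^k$ are both $o(3^k)$, which is automatic.
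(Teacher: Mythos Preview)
Your proposal is correct and follows exactly the approach the paper uses: take $G = M_k$, read off $|V(M_k)| = 3^k + k$, apply Lemma~\ref{highedim} with $v = (2,\ldots,2)$ and $x = k$ to get $\edim(M_k) \ge 3^k - 1 - 2^k$, and note $\dim(M_k) = k$. You in fact supply more justification than the paper does for the equality $\dim(M_k) = k$ (invoking Theorem~\ref{maxdeg} for the lower bound), which the paper simply asserts. One small point: when you appeal to Chebyshev distances, those are distances in $D_k$, and you should in principle check that the shortest paths from $x$ to each $v_i$ can be realized inside $M_k$; this is easy (route through $\{1,2,3\}^k$), but worth a sentence.
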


Next we provide a construction that shows there is no upper bound on $\edim(G)$ only in terms of $\dim(G)$. This was shown independently by \cite{imd}, but our construction is very different from the one in \cite{imd}, so we include it here.

\begin{thm}
There is no upper bound on $\edim(G)$ only in terms of $\dim(G)$.
\end{thm}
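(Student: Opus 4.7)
The plan is to exhibit a family $\{G_n\}_{n \ge 1}$ of graphs with $\dim(G_n)$ bounded by a fixed constant while $\edim(G_n) \to \infty$, which immediately yields the theorem. Concretely, I would take $G_n$ to be the ``triangulated path'' on $2n+1$ vertices: a base path $p_1 p_2 \cdots p_{n+1}$ together with apex vertices $q_1,\dots,q_n$, where $q_i$ is adjacent to both $p_i$ and $p_{i+1}$, so that each triple $(p_i,q_i,p_{i+1})$ forms a triangle and the two apexes incident to an interior base vertex $p_i$ are $q_{i-1}$ and $q_i$.

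To bound $\dim(G_n)\le 2$, I would verify that $\{p_1,p_{n+1}\}$ is a resolving set by a direct shortest-path computation. Since no apex detour is shorter than the base-path route, one obtains $d(p_i,p_1)=i-1$, $d(p_i,p_{n+1})=n+1-i$, $d(q_i,p_1)=i$, and $d(q_i,p_{n+1})=n+1-i$; the resulting $2n+1$ distance vectors are pairwise distinct by a one-line check.

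To bound $\edim(G_n)\ge \lfloor n/2\rfloor$, I would focus on the pairs of edges $e_i=p_iq_i$ and $e_i'=q_{i-1}p_i$ for $2\le i\le n$, both of which are incident to $p_i$. The key lemma is that the only vertices of $G_n$ that distinguish $e_i$ from $e_i'$ are $q_{i-1}$ and $q_i$ themselves. Granting this, any edge resolving set $S$ must satisfy $S\cap\{q_{i-1},q_i\}\neq\emptyset$ for each $i=2,\dots,n$; equivalently, $S$ must contain a vertex cover of the auxiliary path on $q_1,\dots,q_n$ whose edges are $\{q_{i-1},q_i\}$, and any such cover has size at least $\lfloor n/2\rfloor$. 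Since this grows without bound, no function of $\dim(G_n)=O(1)$ can dominate $\edim(G_n)$.

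The main obstacle is establishing the key lemma. For any vertex $\ell\notin\{q_{i-1},q_i\}$ one must show $\min(d(p_i,\ell),d(q_i,\ell))=\min(d(p_i,\ell),d(q_{i-1},\ell))$, which reduces to verifying $d(q_{i-1},\ell),d(q_i,\ell)\ge d(p_i,\ell)$. The underlying geometric fact is that in a triangulated path any shortest route from $p_i$ to $\ell$ that uses an apex shortcut can be replaced by an equally short route along the base path, so neither $q_{i-1}$ nor $q_i$ is strictly closer to $\ell$ than $p_i$ is. A short case split on whether $\ell$ lies on the base path or among the other apexes, and on which side of position $i$, together with the explicit distance formulas, completes the argument.
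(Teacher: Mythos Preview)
Your argument is correct and complete in outline; the distance computations you sketch do go through exactly as claimed, and the vertex-cover-of-a-path counting gives $\edim(G_n)\ge\lfloor n/2\rfloor$ while $\dim(G_n)\le 2$.

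However, your route is genuinely different from the paper's. The paper stays inside its $D_k$ framework, taking $C_2(q)$ (the induced subgraph of $D_2$ on a tilted square of side $q$), which has $\dim=2$ by the general argument of Section~\ref{dkfam}; the lower bound $\edim(C_2(q))\ge q-1$ comes from noting that the two diagonals of any unit cell are distinguished only by landmarks lying on the extensions of those diagonals, so any small landmark set leaves some cell's diagonals unresolved. Your construction is more elementary and entirely self-contained: the triangulated path is outerplanar and does not rely on the $D_k$ embedding, and your key lemma localizes the obstruction to just two apex vertices per interior base vertex, which is arguably cleaner than the paper's line-covering argument. On the other hand, the paper's proof reinforces the theme that $D_k$ is a universal testbed for such extremal questions, and the diagonal argument there generalizes naturally to higher $k$, whereas your triangulated path is a one-off gadget tailored to $\dim\le 2$. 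Both yield a linear lower bound on $\edim$ against bounded $\dim$, so neither is quantitatively stronger.
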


\begin{proof}
Consider the graphs $C_2(q)$ (the induced subgraph of $D_2$ on the square with corners $(0,q)$, $(q,0)$, $(q,2q)$ and $(2q,q)$) for $q \geq 1$. These graphs all have metric dimension $2$, as noted in Section \ref{dkfam}.

Now we show that $\edim(C_2(q)) \geq q-1$. For this, note that, in order to distinguish the two diagonals of a unit square (i.e. pair of edges of the form $\left\{(x,y),(x+1,y+1)\right\}$ and $\left\{(x+1,y),(x,y+1)\right\}$), we need to have a landmark somewhere on the continuation of one of these diagonals. Indeed, the two edges have the same distance to every other point.

In other words, for any integers $q<t,s<3q$ with $t \not\equiv s \pmod{2}$, we must have a landmark on either the line $x+y=t$ or the line $x-y=s$. However, if we have less than $q-1$ landmarks, then there will be some even $q<t<3q$ for which no landmark satisfies $x+y=t$ and, similarly, an odd $q<s<3q$ for which no landmark satisfies $x-y=s$. This is a contradiction, so $\edim(C_2(q))\ge q-1$.

We thus have constructed a family of graphs with metric dimension $2$ and arbitrarily large edge metric dimension, which shows that there cannot be any upper bound on $\edim(G)$ depending uniquely on $\dim(G)$.
\end{proof}

\section{Metric dimension and edge metric dimension of $d$-dimensional grids}\label{sgrid}

We prove in this section that $P_n^d$ has both metric dimension and edge metric dimension $d$ for $n$ sufficiently large. This answers a question from \cite{gmd} for $n$ sufficiently large.

\begin{thm}
If $n \geq d^{d-1}$, then $\dim(P_n^d)=d$.
\end{thm}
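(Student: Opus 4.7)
The plan is to establish matching upper and lower bounds on $\dim(P_n^d)$. The upper bound $\dim(P_n^d)\le d$ is a standard construction that in fact holds for all $n\ge 2$, while the lower bound $\dim(P_n^d)\ge d$ will follow from a direct pigeonhole argument that compares the number of vertices with the number of possible distance vectors from any set of $d-1$ landmarks.

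For the upper bound, I would take as landmarks $\ell_0=(1,1,\dots,1)$ together with $\ell_i=(1,\dots,1,n,1,\dots,1)$ (the coordinate $n$ in position $i$) for $i=1,\dots,d-1$, giving $d$ landmarks in total. Using the fact that the graph distance on $P_n^d$ equals the $\ell^1$ distance between coordinate vectors, for any $v=(v_1,\dots,v_d)$ one computes $d(v,\ell_0)-d(v,\ell_i)=2v_i-1-n$, which determines each $v_i$ for $i<d$; then $v_d$ is recovered from $d(v,\ell_0)=\sum_j v_j-d$. Hence this set of $d$ landmarks is resolving.

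For the lower bound, I would suppose we have any $d-1$ landmarks. Since $\operatorname{diam}(P_n^d)=d(n-1)$, each distance vector lies in $\{0,1,\dots,d(n-1)\}^{d-1}$, so there are at most $(d(n-1)+1)^{d-1}$ possible distance vectors compared to $n^d$ vertices. The key inequality to verify is $n^d>(d(n-1)+1)^{d-1}$ for $n\ge d^{d-1}$, which follows from the strict bound $(d(n-1)+1)^{d-1}<(dn)^{d-1}=d^{d-1}n^{d-1}$ combined with $n\ge d^{d-1}$, yielding $n^d\ge d^{d-1}n^{d-1}>(d(n-1)+1)^{d-1}$. By pigeonhole, two distinct vertices then share a distance vector, so no set of $d-1$ landmarks resolves $P_n^d$, forcing $\dim(P_n^d)\ge d$ and, together with the upper bound, $\dim(P_n^d)=d$.

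I do not anticipate a significant obstacle: the argument reduces to verifying one elementary inequality, and the threshold $n\ge d^{d-1}$ is precisely what is needed for the crude bound $(dn)^{d-1}$ on the number of possible distance vectors to drop below $n^d$. The only mild care is to note that the inequality $(d(n-1)+1)^{d-1}<(dn)^{d-1}$ is strict, which lets the argument go through at the boundary case $n=d^{d-1}$ rather than requiring $n>d^{d-1}$.
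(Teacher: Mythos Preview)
Your proof is correct and follows essentially the same approach as the paper: the upper bound is the standard resolving set (which the paper simply cites), and the lower bound is a counting/pigeonhole comparison of $|V(P_n^d)|=n^d$ against the number of possible distance vectors, reduced to the same elementary inequality $n^d\ge (dn)^{d-1}>(d(n-1)+1)^{d-1}$ under $n\ge d^{d-1}$. The only cosmetic difference is that the paper invokes the slightly sharper order bound $k+D^k$ from \cite{lmd} instead of the trivial $(D+1)^k$, but this makes no difference to the argument.
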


\begin{proof}
It is known that for all $d,n$ we have $\dim(P_n^d) \leq d$. For the lower bound, we note that $P_n^d$ has $n^d$ vertices and it's diameter is $(n-1)d$. It is known that graphs with metric dimension $k$ and diameter $D$ have order at most $k+D^k$ \cite{lmd}. Thus $k+((n-1)d)^k \geq n^d$. If $\dim(P_n^d)$ is less than $d$, then it is at most $d-1$, so $(d-1)+((n-1)d)^{d-1} \geq n^d$. But $n \geq d^{d-1}$ implies that $n^d \geq (nd)^{d-1}$. We can see that $(nd)^{d-1}>(d-1)+((n-1)d)^{d-1}$, which means $n^d>(d-1)+((n-1)d)^{d-1}$, which contradicts the earlier inequality $(d-1)+((n-1)d)^{d-1} \geq n^d$. Thus if $n \geq d^{d-1}$, then we have $\dim(P_n^d)=d$.
\end{proof}

Also, we can use the inequality in the last proof to derive the next bound, of independent interest when $n < d^{d-1}$.

\begin{lem}
For general $n,d$ we have $\dim(P_n^d) \geq \frac{d \log(n)}{\log(d(n-1)+1)}$.
\end{lem}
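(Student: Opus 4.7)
The plan is to reuse the diameter-based inequality that already appeared in the proof of the previous theorem, and then replace the slightly awkward quantity $k + (d(n-1))^k$ by the cleaner $(d(n-1)+1)^k$ before taking logarithms.

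Concretely, let $k = \dim(P_n^d)$. The graph $P_n^d$ has $n^d$ vertices and diameter $d(n-1)$, so the bound of Khuller et al.\ (order at most $k + D^k$ for a graph of metric dimension $k$ and diameter $D$) that was invoked in the previous theorem gives
\[
n^d \;\le\; k + (d(n-1))^k.
\]
The next step is to absorb the additive $k$ into the power. Writing $a = d(n-1)$, the binomial expansion yields
\[
(a+1)^k \;=\; a^k + k a^{k-1} + \binom{k}{2} a^{k-2} + \cdots + 1 \;\ge\; a^k + k
\]
provided $a \ge 1$, i.e.\ whenever $n \ge 2$ (the edge cases $n=1$ or $d=0$ are trivial since both sides of the claimed inequality vanish or are non-positive). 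Combining the two displays gives $n^d \le (d(n-1)+1)^k$, and taking logarithms of both sides produces $d \log n \le k \log(d(n-1)+1)$, which rearranges to the desired bound.

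The main ``obstacle'' here is really just cosmetic: one has to notice that the messy right-hand side of the Khuller--Raghavachari--Rosenfeld inequality can be dominated by a single $k$-th power, which is precisely what makes the logarithm usable. Once that is observed, the proof is two lines. I would therefore write it up essentially as the derivation above, perhaps noting explicitly that the case $n=1$ is vacuous and that for $n \ge 2, d \ge 1$ the hypothesis $a \ge 1$ of the binomial step is automatic.
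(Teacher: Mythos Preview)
Your argument is correct and matches the paper's intent: the paper does not spell out a proof but simply remarks that the bound follows from the inequality $n^d \le k + (d(n-1))^k$ established in the preceding theorem, which is exactly your starting point. Your binomial step $k + a^k \le (a+1)^k$ for $a \ge 1$ is the natural way to make that derivation explicit, and your handling of the trivial edge cases is fine.
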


A similar technique also allows us to prove the analogous result for edge metric dimension.

\begin{thm}
If $n \geq d^{d-1}$, then $\edim(P_n^d)=d$.
\end{thm}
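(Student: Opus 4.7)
The plan is to mirror the preceding proof that $\dim(P_n^d)=d$, replacing the order/diameter inequality with a size/diameter inequality appropriate to edge metric dimension. The upper bound $\edim(P_n^d)\le d$ was already established in \cite{gmd}, so only the matching lower bound remains.

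First I would record two exact parameters of $P_n^d$: its diameter is $D=d(n-1)$, attained by opposite corners, and it has exactly $|E(P_n^d)|=d(n-1)n^{d-1}$ edges (choose one of $d$ coordinate directions, a starting value in that direction in $n-1$ ways, and fill the remaining $d-1$ coordinates freely in $n^{d-1}$ ways). Assume for contradiction that $\edim(P_n^d)\le d-1$. Invoking Zubrilina's bound from \cite{zmd} that every graph with edge metric dimension $k$ and diameter $D$ has at most $\binom{k}{2}+kD^{k-1}+D^{k}$ edges, with $k=d-1$ and $D=d(n-1)$, gives
\[
d(n-1)n^{d-1} \;\le\; \binom{d-1}{2}+(d-1)\bigl(d(n-1)\bigr)^{d-2}+\bigl(d(n-1)\bigr)^{d-1}.
\]

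Finally I would derive a contradiction from $n\ge d^{d-1}$. Comparing the left side with the dominant right-side term gives $\frac{d(n-1)n^{d-1}}{(d(n-1))^{d-1}}=\frac{n^{d-1}}{d^{d-2}(n-1)^{d-2}}\ge \frac{n}{d^{d-2}}\ge d$, where the final inequality is exactly the algebraic consequence of $n\ge d^{d-1}$ already exploited in the preceding metric-dimension proof. Thus the left side is at least $d\cdot(d(n-1))^{d-1}$, and a short estimate shows that the lower-order terms $\binom{d-1}{2}$ and $(d-1)(d(n-1))^{d-2}$ on the right are dwarfed by the remaining slack $(d-1)(d(n-1))^{d-1}$, yielding the desired contradiction. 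The main obstacle will be keeping this bookkeeping tight enough to make the inequality work exactly at the threshold $n=d^{d-1}$, and in particular handling the base case $d=2$ where all three terms on the right are of comparable size; this is a purely algebraic nuisance rather than a conceptual one.
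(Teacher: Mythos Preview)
Your proposal is correct and follows essentially the same route as the paper: compute the exact edge count $d(n-1)n^{d-1}$ and diameter $d(n-1)$ of $P_n^d$, then contradict an upper bound on the number of edges in a graph of given edge metric dimension and diameter. The only difference is which bound you invoke: the paper uses the cruder Kelenc--Tratnik--Yero bound $(D+1)^k$ from \cite{emd} rather than Zubrilina's refinement, so the inequality to falsify is simply $d(n-1)n^{d-1}\le (d(n-1)+1)^{d-1}$, and the algebra is one line with no lower-order terms to track. Your choice of the sharper bound is not wrong, but it is what creates the bookkeeping you flag at the end; switching to $(D+1)^k$ removes that nuisance entirely.
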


\begin{proof}
It is known that $\edim(P_n^d)\le d$ \cite{gmd}. For the lower bound, note that $P_n^d$ has $d (n-1) n^{d-1}$ edges. Graphs with diameter $D$ and edge metric dimension $k$ have at most $(D+1)^k$ edges \cite{emd}. If the edge metric dimension is at most $d-1$, there are at most $(d(n-1)+1)^{d-1}$ edges, so $d (n-1) n^{d-1} \leq (d(n-1)+1)^{d-1}$, but this is clearly false for $n \geq d^{d-1}$ and $d \geq 2$. Thus if $n \geq d^{d-1}$, then $\edim(P_n^d)=d$.
\end{proof}

As before, we can use the inequality in the last proof to derive the next bound.

\begin{lem}
For general $n,d$ we have $\edim(P_n^d) \ge \frac{log(d)+log(n-1)+(d-1)log(n)}{log(d(n-1)+1)}$
\end{lem}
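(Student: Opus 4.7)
The plan is to reuse the same counting inequality that drove the preceding theorem, but instead of exploiting it to produce a contradiction under the hypothesis $n \ge d^{d-1}$, I would extract from it a clean logarithmic lower bound on $\edim(P_n^d)$ that holds for every $n$ and $d$. The only structural facts needed are that $P_n^d$ has exactly $d(n-1)n^{d-1}$ edges and diameter $d(n-1)$, both of which are standard.

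Setting $k=\edim(P_n^d)$ and $D=d(n-1)$, the bound of Kelenc et al.~\cite{emd} that a graph of edge metric dimension $k$ and diameter $D$ has at most $(D+1)^k$ edges gives
\[
(d(n-1)+1)^{k} \;\ge\; d(n-1)\,n^{d-1}.
\]
Taking logarithms of both sides and dividing by $\log(d(n-1)+1)>0$ isolates $k$ and yields exactly
\[
\edim(P_n^d) \;\ge\; \frac{\log d+\log(n-1)+(d-1)\log n}{\log(d(n-1)+1)},
\]
which is the claimed inequality.

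There is no real obstacle here: the lemma is simply the logarithmic rearrangement of the edge-count inequality that was already the workhorse of the previous theorem, and the argument is strictly parallel to the derivation of the analogous lower bound on $\dim(P_n^d)$, the only change being that we invoke the $(D+1)^k$ edge bound from \cite{emd} in place of the $k+D^k$ vertex bound from \cite{lmd}.
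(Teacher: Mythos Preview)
Your proposal is correct and follows exactly the approach the paper indicates: the paper merely remarks that ``we can use the inequality in the last proof to derive the next bound,'' and you have carried out precisely that rearrangement, applying the $(D+1)^k$ edge bound of \cite{emd} to the edge count $d(n-1)n^{d-1}$ and diameter $d(n-1)$ of $P_n^d$ and then taking logarithms.
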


\section{Concluding remarks}\label{sprob}

In this paper, we completely resolved the problems from \cite{gmd} of determining the maximum possible degree of a graph of metric dimension at most $k$, the maximum possible degeneracy of a graph of metric dimension at most $k$, the maximum possible chromatic number of a graph of metric dimension at most $k$, the maximum $n$ for which there exists a graph of metric dimension at most $k$ that contains $K_{n, n}$, and the values of $\dim(P_n^d)$ and $\edim(P_n^d)$ for $n$ sufficiently large with respect to $d$. Using these results, we made new progress on Zubrilina's problem of finding all triples $(x, y, n)$ for which there exists a graph of metric dimension $x$, edge metric dimension $y$, and order $n$. In particular, we proved that for each integer $k > 0$, there exist graphs $G$ with metric dimension $k$, edge metric dimension $3^k(1-o(1))$, and order $3^k(1+o(1))$.

We showed that the maximum possible clique number of a graph of edge metric dimension at most $k$ is $2^{\Theta(k)}$, sharpening the lower bound of $k+1$ from \cite{gmd}. We also showed that the largest wheel in a graph of metric dimension at most $k$ has $3^k$ vertices and the maximum $n$ for which there exists a graph of metric dimension at most $k$ that contains $Q_n$ is $n = \Theta(k \log(k))$. It would be interesting to find the exact value for the maximum possible clique number of a graph of edge metric dimension at most $k$, and and the exact value of the maximum $n$ for which there exists a graph of metric dimension at most $k$ that contains $Q_n$. Another interesting problem is to investigate the maximum $n$ for which there exists a graph of edge metric dimension at most $k$ that contains $Q_n$.

We determined that the maximum possible minimum degree of a graph of metric dimension $k$ is $3^{k-1}$ for $k = 2$ and $k = 3$, but this extremal problem is open for all $k > 3$. For metric dimension and pattern avoidance, we have investigated extremal results for containment and avoidance of complete graphs, complete bipartite graphs, stars, hypercube graphs, and wheels. It would also be interesting to investigate similar extremal results for other families of subgraphs such as balanced spiders, full binary trees, and multidimensional grids. 

\section*{Acknowledgment}
This paper has resulted from the 2020 CrowdMath project on metric dimension (online at \url{http://www.aops.com/polymath/mitprimes2020}). CrowdMath is an open program created jointly by the MIT Program for Research in Math, Engineering, and Science (PRIMES) and the Art of Problem Solving that gives students all over the world the opportunity to collaborate on a research project.

%%%%%%%%

\end{document}